\documentclass[11pt]{amsart}

\usepackage[T1]{fontenc}
\usepackage[utf8]{inputenc}
\usepackage{lmodern}
\usepackage{amsmath,amssymb,amsthm,mathtools}
\usepackage{enumitem}
\usepackage{aliascnt}
\usepackage{microtype}
\usepackage{hyperref}

\hypersetup{
  colorlinks=true,
  linkcolor=blue,
  citecolor=blue,
  urlcolor=blue
}

\theoremstyle{plain}
\newtheorem{theorem}{Theorem}[section]
\newaliascnt{corollary}{theorem}
\newtheorem{corollary}[corollary]{Corollary}
\aliascntresetthe{corollary}
\newaliascnt{proposition}{theorem}
\newtheorem{proposition}[proposition]{Proposition}
\aliascntresetthe{proposition}
\newaliascnt{lemma}{theorem}
\newtheorem{lemma}[lemma]{Lemma}
\aliascntresetthe{lemma}

\theoremstyle{definition}
\newaliascnt{definition}{theorem}
\newtheorem{definition}[definition]{Definition}
\aliascntresetthe{definition}
\newaliascnt{problem}{theorem}
\newtheorem{problem}[problem]{Problem}
\aliascntresetthe{problem}
\newaliascnt{remark}{theorem}
\newtheorem{remark}[remark]{Remark}
\aliascntresetthe{remark}
\newaliascnt{example}{theorem}

\aliascntresetthe{example}

\usepackage[capitalize,noabbrev]{cleveref}
\crefname{problem}{Problem}{Problems}
\Crefname{problem}{Problem}{Problems}

\newcommand{\Aut}{\operatorname{Aut}}

\newcommand{\id}{\operatorname{id}}

\newcommand{\Cay}{\operatorname{Cay}}
\newcommand{\Grig}{\mathfrak G}
\newcommand{\eps}{\varepsilon}

\title[Closed image characterizations]{Closed Image Characterizations of Locally Finite Groups via Cellular Automata}
\author{Jiang Yang}
\address{School of Mathematical Sciences, Guangxi Minzu University, Nanning, China}
\email{yangjiangdy@126.com}
\date{}

\subjclass[2020]{37B15, 20F65, 20F50, 16S34}
\keywords{Cellular automata; linear cellular automata; closed image property; locally finite groups; infinite alphabets; prodiscrete topology; Grigorchuk group}

\begin{document}

\begin{abstract}
We prove that a group $G$ is locally finite if and only if, for some (equivalently, every) infinite set $A$, every cellular automaton $A^G\to A^G$ has closed image in the prodiscrete topology.  Equivalently, this holds if and only if every linear cellular automaton $V^G\to V^G$ has closed image for some pair $(K,V)$ with $V$ infinite-dimensional over the field $K$ (equivalently, for every such pair).  This gives affirmative answers to Open Problems 6 and 7 of Ceccherini-Silberstein and Coornaert.  More precisely, if $G$ is not locally finite, then for every infinite set $A$ there is a finite-memory cellular automaton $A^G\to A^G$ with non-closed image, and for every field $K$ and every infinite-dimensional $K$-vector space $V$ there is such a linear cellular automaton $V^G\to V^G$.  The common obstruction is constructed on a countable direct-sum alphabet from an infinite ray in a locally finite Cayley graph.  A direct-summand argument gives arbitrary vector-space alphabets, while an alphabet-retract principle gives arbitrary infinite set alphabets.
\end{abstract}

\maketitle

\section{Introduction}

Let $G$ be a group and let $A$ be a set.  A cellular automaton over $G$ with alphabet $A$ is a map $A^G\to A^G$ determined by a finite memory set and a local rule.  In the classical one-dimensional setting, this is the content of Hedlund's characterization of endomorphisms of shift systems as sliding block codes \cite{Hedlund1969}; in the group setting, the corresponding formulation says that cellular automata are precisely the $G$-equivariant uniformly continuous self-maps of the full configuration space $A^G$ endowed with its prodiscrete uniform structure, and for finite alphabets this is the Curtis--Hedlund--Lyndon theorem.  When $A$ is a vector space $V$ over a field $K$ and the local rule is $K$-linear, one obtains a linear cellular automaton.  The finite-dimensional linear theory includes a Garden of Eden theorem over amenable groups \cite{CeC2006}; moreover, finite-dimensional linear cellular automata are represented by matrices over group algebras, connecting their dynamics with the algebraic structure of $K[G]$ \cite{CAG2,ExercisesCAG}.

This paper concerns the closed image property.  In the usual ZFC framework, for a finite alphabet $A$, Tychonoff compactness gives that $A^G$ is compact, so the image of every cellular automaton $A^G\to A^G$ is closed.  In the linear setting, if $V$ is finite-dimensional over $K$, then every linear cellular automaton
\[
        \tau\colon V^G\longrightarrow V^G
\]
has closed image in the prodiscrete topology, for every group $G$ \cite[Theorem 8.8.1]{CAG2}.  These finiteness hypotheses cannot simply be removed.  Ceccherini-Silberstein and Coornaert proved that if $G$ is non-periodic and $V$ is infinite-dimensional, then some linear cellular automaton $V^G\to V^G$ has non-closed image \cite[Theorem 1.5]{CeC2011}.  Conversely, if $G$ is locally finite, then every cellular automaton over $G$ has closed image for every alphabet \cite[Proposition 6.1]{CeC2011}.

The remaining regime is therefore the periodic but non-locally finite case.  The linear question already appeared in the first edition of \emph{Cellular Automata and Groups} \cite[Open Problems]{CAG1}; the second edition formulates the two adjacent questions in this regime as OP-6 and OP-7 \cite[Open Problems, OP-6 and OP-7, p.~527]{CAG2}.

\begin{problem}[Ceccherini-Silberstein--Coornaert, OP-6]\label{prob:op6}
Let $G$ be a periodic group which is not locally finite and let $A$ be an infinite set.  Does there exist a cellular automaton
\[
        \tau\colon A^G\longrightarrow A^G
\]
whose image $\tau(A^G)$ is not closed in $A^G$ with respect to the prodiscrete topology?
\end{problem}

\begin{problem}[Ceccherini-Silberstein--Coornaert, OP-7]\label{prob:op7}
Let $G$ be a periodic group which is not locally finite, let $K$ be a field, and let $V$ be an infinite-dimensional $K$-vector space.  Does there exist a linear cellular automaton
\[
        \tau\colon V^G\longrightarrow V^G
\]
whose image $\tau(V^G)$ is not closed in $V^G$ with respect to the prodiscrete topology?
\end{problem}

We answer both questions affirmatively and, in fact, show that periodicity plays no role.  Throughout the paper, all set-theoretic statements are made in the usual ZFC framework; the exact dependence on choice and the corresponding ZF-qualified form are recorded in \cref{rem:set-theoretic-convention}.

\begin{theorem}[Non-locally finite groups]\label{thm:main-intro}
Let $G$ be a non-locally finite group.
\begin{enumerate}[label=\textup{(\alph*)}]
\item For every infinite set $A$, there exists a finite-memory cellular automaton
\[
        \tau\colon A^G\longrightarrow A^G
\]
whose image is not closed in the prodiscrete topology.
\item For every field $K$ and every infinite-dimensional $K$-vector space $V$, there exists a finite-memory linear cellular automaton
\[
        \tau\colon V^G\longrightarrow V^G
\]
whose image is not closed in the prodiscrete topology.
\end{enumerate}
\end{theorem}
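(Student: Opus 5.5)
The plan is to reduce everything to one explicit linear cellular automaton over a finitely generated infinite subgroup, with a countable direct-sum alphabet. Then transfer it first to $G$, then to arbitrary vector spaces, then to arbitrary infinite sets.

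\emph{Step 1 (a ray).} Since $G$ is not locally finite, it contains a finitely generated infinite subgroup $H=\langle S\rangle$, with $S=S^{-1}$ finite. The Cayley graph $\Cay(H,S)$ is infinite, connected and locally finite. By König's lemma it contains a geodesic ray $h_0=1_H,h_1,h_2,\dots$ with $h_{n+1}=h_ns_n$, $s_n\in S$ and $d(1,h_n)=n$.

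\emph{Step 2 (the model automaton).} Fix a field $K$ and let $V_0=\bigoplus_{n\ge0}Ke_n$. For $s\in S$ define $P_s\in\operatorname{End}(V_0)$ by
\[
P_s e_n=\begin{cases} e_{n+1} & \text{if } s_n=s,\\ 0 & \text{otherwise.}\end{cases}
\]
Define the linear cellular automaton $\tau_0\colon V_0^H\to V_0^H$ with memory set $\{1\}\cup S$ by
\[
\tau_0(x)(h)=x(h)-\sum_{s\in S}P_s\,x(hs).
\]
The intended mechanism is the same as for $\mathbb Z$ with alphabet $K[t]$ and the rule $x(n)-t\,x(n+1)$. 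Formally inverting $\tau_0$ forces infinite sums, whereas every finite window can be solved by truncation. Concretely, I would take a configuration $y$ supported on the ray, for instance $y(h_n)=e_n$ or a suitably alternating variant. I would show $y\in\overline{\tau_0(V_0^H)}$ by solving the equation exactly on each finite ball, using truncated sums of the form $x(h_m)=\sum_{m\le k\le N}(\cdots)e_k$ placed along the ray. I would then show $y\notin\tau_0(V_0^H)$. The argument is to follow the $e_k$-coordinates: any preimage would force $x(1)$ to have nonzero $e_k$-component for infinitely many $k$, which is impossible in a direct sum.

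\emph{Main obstacle.} The delicate step is this last one. It requires that contributions to a given coordinate $e_k$ at a given vertex propagate only along the ray. Since $P_s$ raises the index by exactly one and is nonzero only on $e_n$ with $s_n=s$, an index-$k$ component can only come from the neighbour $hs_{k-1}$. One must check that closed walks in $H$ (which may be torsion-rich) cannot create cancellations. I expect to use the geodesic property, i.e.\ distinct $h_n$ and distance growing, to run an induction on $k$ that pins down the relevant components. If cancellations are possible, I would enlarge the alphabet to $\bigoplus_{n}K^{S}$ to record directions explicitly.

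\emph{Step 3 (transfers).} From $H$ to $G$: choose left coset representatives and identify $V^G\cong\prod_{cosets}V^H$. The automaton with the same memory set $\{1\}\cup S\subset H$ acts coset-wise as $\tau_0$, so its image is a product and is closed iff $\tau_0$'s image is closed.

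For an arbitrary infinite-dimensional $V$: write $V=V_0\oplus W$ with $V_0$ of countable dimension, and use $\tau_0\oplus0$. Its image is $\tau_0(V_0^G)\times0$, and this image is closed iff $\tau_0(V_0^G)$ is closed, since $V_0^G$ is a closed direct summand.

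For an infinite set $A$: take $K=\mathbb F_2$, so $V_0$ is countable. Choose an injection $\iota\colon V_0\to A$ and a retraction $r\colon A\to V_0$. Set $\tau=\iota\circ\tau_0\circ r$, applied pointwise. This is a finite-memory cellular automaton with image $\iota(\tau_0(V_0^G))$. Because $\iota^G$ is a homeomorphism onto the closed set $\iota(V_0)^G$, this image is non-closed.
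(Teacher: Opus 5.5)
There is a genuine gap in Step 2, which is the core of the theorem. You never fix a target $y$ for which $y\notin\tau_0(V_0^H)$ is actually proved, and the concrete target you offer fails in general.

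First, there is no cancellation issue at all. Your rule gives $(\tau_0x)_0(h)=x_0(h)$ and $(\tau_0x)_{n+1}(h)=x_{n+1}(h)-x_n(hs_n)$. So any preimage of $y$ is uniquely forced:
$x_n(h)=\sum_{j=0}^{n}y_j(hs_{n-1}s_{n-2}\cdots s_j)$.
The only question is whether some $x(h)$ acquires infinite support, and for a target supported on the ray this typically does not happen. Take $H=F_2=\langle a,b\rangle$ with $S=\{a^{\pm1},b^{\pm1}\}$, the geodesic ray $a,ab,aba,\dots$ (so $s_n$ alternates $a,b$), and $y(h_n)=e_n$. Then $x_n(h)\neq0$ forces $h=s_0\cdots s_{j-1}s_j^{-1}\cdots s_{n-1}^{-1}$, which is a reduced word of length exactly $n$. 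Hence every $x(h)$ has at most one nonzero coordinate, $x\in V_0^H$, and $\tau_0(x)=y$, so $y$ lies in the image. Your $\mathbb Z$ heuristic works only because in $\mathbb Z$ these backward zigzags fold back onto the ray. Note also that the backward paths $h,hs_{n-1},hs_{n-1}s_{n-2},\dots$ are products in \emph{reversed} order. They are not translates of your ray, so its geodesic property does not control them.

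The missing idea is to use a \emph{constant} target and to orient the rule so that the backward paths are left translates of the ray. The paper takes $p_n=s_1\cdots s_n$ pairwise distinct and a rule with coordinates $(\tau x)_0=x_0$ and $(\tau x)_n(g)=x_n(gs_n)-x_{n-1}(g)$, together with $y\equiv e_0$. In your notation, replacing $x(hs)$ by $x(hs^{-1})$ has the same effect. Non-membership then needs no geometry: $x_0\equiv1$, and since right multiplication by $s_n$ is a bijection, $x_n(gs_n)=x_{n-1}(g)$ forces $x_n\equiv1$ for every $n$. Thus $x(g)=e_0+e_1+\cdots\notin V_0$.

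Distinctness of the $p_n$ is used only for local liftability. On a finite set $\Omega$, put $x_n=\chi_{A_n}$ with $A_0=\Omega$ and $A_n=(A_{n-1}\cap\Omega)s_n$. Then $A_n$ consists of the points $gp_n$ with $gp_0,\dots,gp_{n-1}\in\Omega$, which forces $A_n=\varnothing$ for $n>|\Omega|$.

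Your Step 1 and the transfers in Step 3 are sound and essentially match the paper's. These are the coset decomposition, $\tau_0\oplus0$ on $V_0\oplus W$, and an injection plus retraction for set alphabets. The only difference is that the paper works on $G$ directly rather than passing through $H$.
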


\begin{corollary}[Answers to OP-6 and OP-7]\label{cor:open-problems}
Let $G$ be a periodic group which is not locally finite.  Then the answers to \cref{prob:op6,prob:op7} are affirmative.
\end{corollary}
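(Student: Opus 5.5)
The corollary itself is immediate once \cref{thm:main-intro} is available, so the plan is to deduce it directly from the theorem. A periodic group that is not locally finite is, in particular, a non-locally finite group. We therefore apply \cref{thm:main-intro} to $G$ and check the two open problems in turn.

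\emph{OP-6.} Let $A$ be any infinite set, as in \cref{prob:op6}. Part (a) of \cref{thm:main-intro} gives a finite-memory cellular automaton $\tau\colon A^G\to A^G$ whose image is not closed in the prodiscrete topology. This is precisely the object that \cref{prob:op6} asks for, so OP-6 has an affirmative answer.

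\emph{OP-7.} Let $K$ be a field and $V$ an infinite-dimensional $K$-vector space, as in \cref{prob:op7}. Part (b) of \cref{thm:main-intro} gives a finite-memory linear cellular automaton $\tau\colon V^G\to V^G$ with non-closed image. This answers OP-7 affirmatively.

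The only thing to verify is that the hypotheses match. The problems require $G$ to be periodic and not locally finite, while the theorem requires only that $G$ is not locally finite. Periodicity is therefore an unused extra assumption, and nothing beyond \cref{thm:main-intro} is needed.
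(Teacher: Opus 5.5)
Your proposal is correct and matches the paper's own proof: a periodic group that is not locally finite is in particular not locally finite, so parts (a) and (b) of \cref{thm:main-intro} give affirmative answers to OP-6 and OP-7, respectively.
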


The construction yields a particularly clean characterization of local finiteness.

\begin{theorem}[Closed image characterization]\label{thm:characterization-intro}
For a group $G$, the following conditions are equivalent:
\begin{enumerate}[label=\textup{(\roman*)}]
\item $G$ is locally finite;
\item for every set $A$, every cellular automaton $\tau\colon A^G\to A^G$ has closed image in the prodiscrete topology;
\item for every infinite set $A$, every cellular automaton $\tau\colon A^G\to A^G$ has closed image;
\item for some infinite set $A$, every cellular automaton $\tau\colon A^G\to A^G$ has closed image;
\item for every field $K$, every $K$-vector space $V$, and every linear cellular automaton $\tau\colon V^G\to V^G$, the image $\tau(V^G)$ is closed;
\item for some field $K$ and some infinite-dimensional $K$-vector space $V$, every linear cellular automaton $\tau\colon V^G\to V^G$ has closed image.
\end{enumerate}
All closures are taken in the corresponding prodiscrete topology.
\end{theorem}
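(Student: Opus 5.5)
The plan is to close a cycle of implications using \cref{thm:main-intro} and the known locally finite case. First, (i)$\Rightarrow$(ii) is exactly \cite[Proposition 6.1]{CeC2011}: over a locally finite group every cellular automaton, for every alphabet, has closed image. I would still record the underlying argument, since it is short. The memory set $S$ lies in a finite subgroup $H$. A configuration $y$ lies in the closure of the image $\tau(A^G)$ if and only if every restriction of $y$ to a finite set is realized by $\tau$ of some configuration. Because the cosets $gH$ are finite and the local rule only couples sites within one coset, for each coset $gH$ we can pick $x|_{gH}$ with $\tau(x)|_{gH}=y|_{gH}$. Gluing these choices over all cosets, using the axiom of choice, gives a global preimage. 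For (ii)$\Rightarrow$(v), note that a linear cellular automaton is in particular a cellular automaton with alphabet the set $V$, and the prodiscrete topology on $V^G$ is the same whether $V$ is viewed as a set or as a vector space.

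Next come the trivial specializations. (ii)$\Rightarrow$(iii) restricts to infinite sets. (iii)$\Rightarrow$(iv) holds because infinite sets exist. (v)$\Rightarrow$(vi) holds because infinite-dimensional vector spaces exist, for instance $K^{(\mathbb N)}$ over any field $K$.

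It then suffices to prove (iv)$\Rightarrow$(i) and (vi)$\Rightarrow$(i), both by contraposition. Suppose $G$ is not locally finite. If (iv) held for some infinite set $A$, part (a) of \cref{thm:main-intro} would give a cellular automaton $A^G\to A^G$ with non-closed image, a contradiction. Likewise, if (vi) held for some field $K$ and some infinite-dimensional $V$, part (b) of \cref{thm:main-intro} would give a linear cellular automaton $V^G\to V^G$ with non-closed image. Combining these, (i) is equivalent to each of (ii)--(vi).

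No step is really an obstacle once \cref{thm:main-intro} is assumed; all the difficulty sits in that theorem. The only point needing care is that (ii) and (v) quantify over all alphabets, including finite ones. Finite alphabets and finite-dimensional $V$ are already covered by (i)$\Rightarrow$(ii) through the coset argument, so no separate compactness argument is needed. I would also flag that the gluing step uses choice, consistent with \cref{rem:set-theoretic-convention}.
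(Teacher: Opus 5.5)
Your proposal is correct and follows the paper's proof: you use the same two cycles, (i)$\Rightarrow$(ii)$\Rightarrow$(iii)$\Rightarrow$(iv)$\Rightarrow$(i) and (i)$\Rightarrow$(v)$\Rightarrow$(vi)$\Rightarrow$(i), and you take the non-trivial converses from \cref{thm:main-intro}(a) and (b). The differences are cosmetic: you prove (i)$\Rightarrow$(ii) by gluing coset-wise local preimages rather than by writing the image as a product of closed sets as in \cref{prop:locally-finite-closed}, and you obtain (v) from (ii) rather than directly from (i); both choices are fine.
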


We now explain the common mechanism.  Since $G$ is not locally finite, it contains a finitely generated infinite subgroup $H$.  A locally finite Cayley graph of $H$ contains an infinite simple ray
\[
        p_0=1_G,
        \qquad
        p_n=s_1s_2\cdots s_n\quad(n\geq 1),
\]
where the labels $s_n$ lie in a fixed finite generating set $S$.  In the previously known non-periodic case, an element of infinite order supplies such a ray with constant label.  Periodic groups have no such element.  The point of the present construction is to allow the labels $s_n$ to vary, and to encode the $n$-th label into the $n$-th internal coordinate of an infinite-dimensional alphabet.

First take the countable direct sum
\[
        E=\bigoplus_{n\geq 0}K e_n.
\]
Let $R\colon E\to E$ be the right shift $R(e_n)=e_{n+1}$, let $P_0$ be the projection onto $Ke_0$, and, for each $s\in S$, let $P_s$ be the coordinate selector which keeps precisely those basis vectors $e_n$ with $n\geq 1$ and $s_n=s$.  Define
\begin{equation}\label{eq:intro-tau}
        \tau_E(x)(g)
        =P_0(x(g))-R(x(g))+\sum_{s\in S}P_s(x(gs)),
        \qquad x\in E^G,\ g\in G.
\end{equation}
This is a linear cellular automaton with memory set $\{1_G\}\cup S$.  If $y\in E^G$ is the constant configuration $y(g)=e_0$, then every finite restriction of $y$ is realized by some element of $\tau_E(E^G)$, while a genuine global preimage would force
\[
        x(g)=e_0+e_1+e_2+\cdots
\]
for every $g\in G$, which is not an element of the algebraic direct sum $E$.  Hence
\[
        y\in\overline{\tau_E(E^G)}\setminus\tau_E(E^G).
\]

There are then two alphabet-extension steps.  If $V$ is an arbitrary infinite-dimensional $K$-vector space, choose a complement $V=E\oplus W$ and extend $\tau_E$ by the identity on $W^G$.  For an arbitrary infinite set $A$, take $K=\mathbb F_2$, so that the underlying set of $E$ is countably infinite; in ZFC, embed this set into $A$ and retract $A$ onto the embedded copy.  Composing the local rule with the retraction and the embedding transfers the non-closed image obstruction to $A^G$.  This second step is isolated below as an alphabet-retract principle.

The first Grigorchuk group $\Grig$ is finitely generated, infinite, and periodic.  Hence it is not locally finite, and \cref{thm:main-intro} applies to both ordinary and linear cellular automata on $\Grig$.  It is therefore a concrete test case for both OP-6 and OP-7.

\section{Preliminaries}\label{sec:preliminaries}

\subsection{Cellular automata and the prodiscrete topology}

Let $G$ be a group and let $A$ be a set.  The configuration space $A^G$ is the set of all maps $x\colon G\to A$.  It is equipped with the prodiscrete topology, namely the product topology obtained by giving $A$ the discrete topology.  For $y\in A^G$ and a finite subset $\Omega\subset G$, put
\[
        \mathcal N(y,\Omega)
        =\{z\in A^G:z|_\Omega=y|_\Omega\}.
\]
These sets form a neighborhood basis of $y$.  Hence, for a subset $X\subset A^G$, one has $y\in\overline X$ if and only if for every finite $\Omega\subset G$ there exists $x\in X$ such that $x|_\Omega=y|_\Omega$.

The left shift action of $G$ on $A^G$ is given by
\[
        (gx)(h)=x(g^{-1}h),
        \qquad g,h\in G.
\]
A map $\tau\colon A^G\to A^G$ is a cellular automaton if there exist a finite subset $M\subset G$ and a map
\[
        \mu\colon A^M\longrightarrow A
\]
such that
\begin{equation}\label{eq:ca-definition}
        \tau(x)(g)=\mu\bigl((g^{-1}x)|_M\bigr)
\end{equation}
for all $x\in A^G$ and $g\in G$.  Since $(g^{-1}x)(m)=x(gm)$, the value $\tau(x)(g)$ depends only on the restriction of $x$ to the finite set $gM$.  If $A=V$ is a vector space over a field $K$ and $\mu$ is $K$-linear, then $\tau$ is a linear cellular automaton.

For any map $f\colon A\to B$ and any set $I$, write
\[
        f^I\colon A^I\longrightarrow B^I,
        \qquad
        f^I(x)(i)=f(x(i)),
\]
for the induced coordinatewise map.  This notation will be used for $I=G$ and for finite memory sets.

A group $G$ is locally finite if every finitely generated subgroup of $G$ is finite.  Equivalently, $G$ is not locally finite if and only if it contains a finitely generated infinite subgroup.

\begin{remark}[Set-theoretic convention]\label{rem:set-theoretic-convention}
We work in the usual ZFC framework.  Two elementary consequences of this convention are used below: every infinite set contains a countably infinite subset, and every subspace of a vector space has a linear complement.  Thus the alphabet-retract step transfers the countable alphabet obstruction to every infinite alphabet, and the direct-summand step transfers the countable direct-sum construction to every infinite-dimensional vector-space alphabet.  If one works in ZF without choice, the same arguments still give the ordinary-alphabet conclusion for every Dedekind-infinite alphabet, that is, every alphabet containing a countably infinite subset, and they give the linear conclusion for every vector space containing a complemented copy of $\bigoplus_{n\geq 0}K e_n$.  The locally finite closed-image direction and the countable direct-sum obstruction itself do not use these choice consequences.
\end{remark}

\subsection{The locally finite direction}

We include the standard proof that locally finite groups have the closed image property for arbitrary alphabets.

\begin{proposition}[Closed images over locally finite groups]\label{prop:locally-finite-closed}
Let $G$ be locally finite and let $A$ be an arbitrary set.  Then every cellular automaton $\tau\colon A^G\to A^G$ has closed image in the prodiscrete topology.  In particular, if $V$ is any vector space over any field, then every linear cellular automaton $V^G\to V^G$ has closed image.
\end{proposition}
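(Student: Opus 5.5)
The plan is to reduce to finite subgroups and use a compactness-free ``finite-stage'' argument that exploits the finiteness of the relevant subgroup.  Let $\tau\colon A^G\to A^G$ be a cellular automaton with memory set $M$ and local rule $\mu$, and let $y\in\overline{\tau(A^G)}$.  We must produce $x\in A^G$ with $\tau(x)=y$.  Put $H=\langle M\rangle$, which is a finite subgroup of $G$ since $G$ is locally finite.  The key observation is that $\tau$ decomposes along the left cosets of $H$: for $g\in G$, the value $\tau(x)(g)$ depends only on $x|_{gM}$, and $gM\subset gH$.  Hence, for each left coset $C=gH$, the restriction $\tau(x)|_C$ depends only on $x|_C$.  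Since the cosets partition $G$, the problem splits into independent problems, one per coset.

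Fix a coset $C=gH$.  This is a finite subset of $G$, so applying the closure criterion from \cref{sec:preliminaries} with $\Omega=C$ gives some $x_C\in A^G$ with $\tau(x_C)|_C=y|_C$.  By the observation above, only $x_C|_C$ matters for this equality.  Choose one such $x_C$ for each coset; this uses the axiom of choice over the set $G/H$, which is permitted by our convention in \cref{rem:set-theoretic-convention}.  One could instead avoid choice by fixing a transversal and translating, but we will not need this.  Now define $x\in A^G$ by $x|_C=x_C|_C$ for every coset $C$.  Then for each $h\in C$, the value $\tau(x)(h)$ depends only on $x|_{hM}\subset x|_C=x_C|_C$, so $\tau(x)(h)=\tau(x_C)(h)=y(h)$.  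Thus $\tau(x)=y$, and the image is closed.

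The linear statement follows immediately, because a linear cellular automaton is in particular a cellular automaton on the set $V$.  The only step needing care is verifying that $hM\subset hH=C$ for $h\in C$, which is clear since $M\subset H$.  There is no real obstacle here.  The point of the argument is that local finiteness lets us replace a compactness argument, which fails for infinite $A$, with finitely many exact constraints on each coset.
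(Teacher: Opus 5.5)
Your proof is correct and is essentially the paper's argument. The paper also decomposes $\tau$ into independent automata on the finite cosets of $H=\langle M\rangle$ and writes $\tau(A^G)$ as a product of closed images in discrete spaces; your coset-by-coset choice of local preimages and gluing verifies this by hand, with choice entering at the same point. One remark is inaccurate: fixing a transversal and translating would not avoid choice, since even for $M=\{1_G\}$ one must pick a point in each fibre of the local rule over the values of $y$. Your proof does not rely on that aside, so this is harmless.
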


\begin{proof}
Let $M\subset G$ be a finite memory set for $\tau$, and let $H=\langle M\rangle$.  Since $G$ is locally finite, $H$ is finite.  Let $(C_\lambda)_{\lambda\in\Lambda}$ be the family of left cosets of $H$ in $G$, so that
\[
        G=\bigsqcup_{\lambda\in\Lambda} C_\lambda,
        \qquad
        C_\lambda=g_\lambda H
        \quad(\lambda\in\Lambda).
\]
For $g\in C_\lambda$ one has $gM\subset C_\lambda$, because $M\subset H$.  Hence $\tau(x)(g)$ depends only on the restriction of $x$ to the same coset $C_\lambda$.  Thus $\tau$ decomposes as a product of maps
\[
        \tau_\lambda\colon A^{C_\lambda}\longrightarrow A^{C_\lambda},
        \qquad \lambda\in\Lambda.
\]
Since $H$ is finite, every coset $C_\lambda$ is finite, and hence $A^{C_\lambda}$ is discrete for the prodiscrete topology.  Therefore $\tau_\lambda(A^{C_\lambda})$ is closed in $A^{C_\lambda}$.  Under the natural product identification
\[
        A^G=\prod_{\lambda\in\Lambda}A^{C_\lambda},
\]
the image of $\tau$ is
\[
        \tau(A^G)=\prod_{\lambda\in\Lambda}\tau_\lambda(A^{C_\lambda}),
\]
which is closed as a product of closed subsets.  This proves the assertion.
\end{proof}

\subsection{Infinite rays in locally finite Cayley graphs}

We shall use the following classical form of K\"onig's infinity lemma; see, for example, \cite[Section 8.1]{Diestel}.

\begin{lemma}[K\"onig's infinity lemma]\label{lem:konig}
Every infinite rooted tree in which each vertex has finitely many children contains an infinite ray starting at the root.
\end{lemma}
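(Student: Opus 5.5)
The statement to prove is König's infinity lemma: an infinite rooted tree in which every vertex has finitely many children contains an infinite ray starting at the root. The plan is to build the ray greedily, one vertex at a time, and to keep one invariant throughout: the current vertex has infinitely many descendants.

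For a vertex $v$, let $D(v)$ be the set of descendants of $v$, meaning the vertices $w$ whose unique path from the root passes through $v$, with $v$ itself included. Then
\[
        D(v)=\{v\}\cup\bigcup_{c\text{ a child of }v}D(c),
\]
and this union is finite because $v$ has only finitely many children. So if $D(v)$ is infinite, the pigeonhole principle gives at least one child $c$ of $v$ with $D(c)$ infinite. The base case is the root $r$: here $D(r)$ is the whole tree, which is infinite by hypothesis.

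The recursion is now as follows. Set $v_0=r$. Given $v_n$ with $D(v_n)$ infinite, let $v_{n+1}$ be a child of $v_n$ with $D(v_{n+1})$ infinite. Consecutive vertices $v_n$ and $v_{n+1}$ are adjacent. The vertices are pairwise distinct, since $v_n$ lies at depth exactly $n$. Hence $(v_n)_{n\geq 0}$ is an infinite ray starting at the root.

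No step here is a serious obstacle. The one point that needs care is the set-theoretic convention of \cref{rem:set-theoretic-convention}, because the recursion makes infinitely many choices, which in general uses dependent choice. In the application to Cayley graphs this can be avoided. There the vertices are group elements written as words in a finite generating set $S$, and fixing an order on $S$ lets us take the first admissible child in that order, so the recursion is fully determined and no choice is needed.
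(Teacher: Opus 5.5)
Your argument is correct and is the standard proof: the paper does not prove \cref{lem:konig} itself but cites Diestel, and in the proof of \cref{lem:ray} it describes exactly your recursion of taking the first child whose rooted subtree is infinite. Your remark that a fixed ordering of $S$ makes this recursion choice-free in the Cayley-graph application also matches the paper's own comment there.
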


\begin{lemma}\label{lem:ray}
Let $G$ be a non-locally finite group.  Then there exist a finite subset $S\subset G$ and a sequence $(s_n)_{n\geq 1}$ in $S$ such that the elements
\[
        p_0=1_G,
        \qquad
        p_n=s_1s_2\cdots s_n \quad(n\geq 1)
\]
are pairwise distinct.
\end{lemma}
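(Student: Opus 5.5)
Since $G$ is not locally finite, we fix a finitely generated infinite subgroup $H\le G$ and a finite generating set $T$ of $H$. We take $S=T\cup T^{-1}$, which is finite, and consider the Cayley graph $\Gamma=\Cay(H,S)$. Its vertices are the elements of $H$, and $h$ is joined to $hs$ for $s\in S$. This graph is connected because $S$ generates $H$, every vertex has degree at most $|S|$, and it has infinitely many vertices because $H$ is infinite. The plan is to extract from $\Gamma$ an infinite geodesic-type path starting at $1_G$ and read off its edge labels.

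To obtain a rooted tree to which \cref{lem:konig} applies, we build the tree $\mathcal T$ whose vertices are the finite words $s_1\cdots s_n$ over $S$ for which the prefix products $p_0=1_G,p_1,\dots,p_n$ are pairwise distinct. The parent of a word is obtained by deleting its last letter. The root is the empty word, and each vertex has at most $|S|$ children, so $\mathcal T$ is locally finite.

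The only real point is to check that $\mathcal T$ is infinite. For this we use the word metric on $H$. Every ball of finite radius in $H$ is finite, since it has at most $\sum_{k\le n}|S|^k$ elements, while $H$ is infinite. Hence for every $n$ there is some $h\in H$ whose word length $|h|_S$ equals $n$. Write $h$ as a word of minimal length $n$, say $h=s_1\cdots s_n$. Its prefix products are pairwise distinct: if $p_i=p_j$ with $i<j$, then deleting the letters $s_{i+1},\dots,s_j$ gives a shorter word representing $h$, which contradicts minimality. So $\mathcal T$ has vertices at every depth and is therefore infinite.

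\Cref{lem:konig} now yields an infinite ray $\varnothing, s_1, s_1s_2,\dots$ in $\mathcal T$ starting at the root. By construction each finite stage of this ray has pairwise distinct prefix products. Any two indices $i<j$ appear together in the finite stage of length $j$, so $p_i\ne p_j$, and all the $p_n$ are pairwise distinct. I do not expect a real obstacle here. The one step that needs care is choosing $\mathcal T$ so that its children are finite in number and distinctness is inherited along rays; the tree of injective paths, rather than the Cayley graph itself, is what handles this.
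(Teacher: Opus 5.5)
Your proof is correct, but it applies K\"onig's lemma to a different tree than the paper does.

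\textbf{The paper's tree.} The paper builds a geodesic spanning tree of $\Cay(H,S)$. Each $h\neq 1_G$ receives the predecessor $hs(h)^{-1}$, where $s(h)$ is the first letter, in a fixed ordering of $S$, that decreases $d_S(1_G,\cdot)$. The vertex set of this tree is all of $H$, so infiniteness is immediate. Distinctness of the $p_n$ is automatic, since they are distinct vertices of a tree. In fact one gets the stronger conclusion $d_S(1_G,p_n)=n$, i.e.\ a geodesic ray.

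\textbf{Your tree.} You use the canonical tree $\mathcal T$ of self-avoiding words. Here distinctness is built into the definition of the vertices, and no predecessor selection or ordering of $S$ is needed. The price is that infinitude is no longer free: it is exactly where you use finiteness of balls and the existence of long geodesic words. Your infinitude argument actually shows that the subtree of geodesic words is already infinite. Applying K\"onig's lemma to that subtree would recover the paper's geodesic ray.

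\textbf{A small point.} The facts that balls are finite and $H$ is infinite give directly only elements with $|h|_S\geq n$, not $|h|_S=n$. You can close this in either of two ways:
\begin{enumerate}[label=\textup{(\roman*)}]
\item observe that a length-$n$ prefix of a geodesic word is again geodesic; or
\item note that $\mathcal T$ is closed under taking prefixes, so a vertex at depth $\geq n$ already yields one at depth $n$.
\end{enumerate}
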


\begin{proof}
Choose a finitely generated infinite subgroup $H\leq G$, and let $S\subset H$ be a finite symmetric generating set with $1_G\notin S$.  Consider the right Cayley graph $\Cay(H,S)$, with vertices $H$ and directed edges $h\to hs$ for $h\in H$ and $s\in S$.  It is connected, infinite, and locally finite.

Fix an ordering of the finite set $S$.  Define a rooted geodesic spanning tree $T$ of $\Cay(H,S)$ as follows.  For each vertex $h\neq 1_G$, let $s(h)$ be the first element $s\in S$ such that $d_S(1_G,hs^{-1})=d_S(1_G,h)-1$, and declare $hs(h)^{-1}$ to be the predecessor of $h$.  Such an element exists by the definition of word distance.  The resulting rooted graph is a tree: each non-root vertex has exactly one predecessor, and along every predecessor chain the word distance from $1_G$ strictly decreases, so no cycle can occur.  It is locally finite and infinite.  By K\"onig's infinity lemma, $T$ contains an infinite ray starting at $1_G$.  Equivalently, since the children are labelled by the fixed finite ordered set $S$, the ray may be chosen recursively by taking the first child whose rooted subtree is infinite; no additional choice is involved beyond this finite ordering.  If $s_n$ is the label of the $n$-th edge of this ray, then
\[
        p_n=s_1s_2\cdots s_n
\]
are the successive vertices of the ray, and hence are pairwise distinct.
\end{proof}

\section{The countable direct-sum construction}\label{sec:construction}

Throughout this section, fix a non-locally finite group $G$ and choose $S$ and $(s_n)_{n\geq 1}$ as in \cref{lem:ray}.  Let
\[
        E=\bigoplus_{n\geq 0}K e_n.
\]
Every vector $v\in E$ has a unique finite expansion
\[
        v=\sum_{n\geq 0}v_n e_n,
        \qquad v_n\in K,
\]
where only finitely many $v_n$ are nonzero.

Define the right shift operator
\[
        R\colon E\longrightarrow E,
        \qquad R(e_n)=e_{n+1}\quad(n\geq 0).
\]
Let $P_0\colon E\to E$ be the projection onto $Ke_0$:
\[
        P_0(e_0)=e_0,
        \qquad
        P_0(e_n)=0\quad(n\geq 1).
\]
For each $s\in S$, define a $K$-linear map $P_s\colon E\to E$ by
\begin{equation}\label{eq:Ps}
        P_s(e_n)=
        \begin{cases}
        e_n,& n\geq 1\text{ and }s_n=s,\\
        0,& \text{otherwise.}
        \end{cases}
\end{equation}
This is well defined: although the set of indices $n$ with $s_n=s$ may be infinite, each vector of $E$ has finite support.

Let
\[
        M=\{1_G\}\cup S.
\]
Define
\begin{equation}\label{eq:tauE}
        \tau_E(x)(g)
        =P_0(x(g))-R(x(g))+\sum_{s\in S}P_s(x(gs)),
        \qquad x\in E^G,\ g\in G.
\end{equation}

\begin{lemma}\label{lem:tau-ca}
The map $\tau_E\colon E^G\to E^G$ is a linear cellular automaton with memory set $M$.
\end{lemma}

\begin{proof}
Define a $K$-linear map $\mu\colon E^M\to E$ by
\[
        \mu((v_m)_{m\in M})
        =P_0(v_{1_G})-R(v_{1_G})+
        \sum_{s\in S}P_s(v_s).
\]
Then, for $x\in E^G$ and $g\in G$,
\[
        \mu\bigl((g^{-1}x)|_M\bigr)
        =P_0(x(g))-R(x(g))+
        \sum_{s\in S}P_s(x(gs)),
\]
because $(g^{-1}x)(m)=x(gm)$.  Hence \eqref{eq:tauE} is precisely the cellular automaton with memory set $M$ and local defining map $\mu$.
\end{proof}

For $x\in E^G$, write
\[
        x(g)=\sum_{n\geq 0}x_n(g)e_n,
        \qquad x_n(g)\in K.
\]

\begin{lemma}[Coordinate form]\label{lem:coordinate}
For every $x\in E^G$ and $g\in G$, one has
\begin{equation}\label{eq:coord0}
        (\tau_E(x))_0(g)=x_0(g),
\end{equation}
and, for every $n\geq 1$,
\begin{equation}\label{eq:coordn}
        (\tau_E(x))_n(g)=x_n(gs_n)-x_{n-1}(g).
\end{equation}
\end{lemma}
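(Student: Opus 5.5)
The plan is to compute the $n$-th coordinate of each of the three summands in \eqref{eq:tauE} separately and then add; all maps involved are $K$-linear and coordinate extraction $v\mapsto v_n$ is linear, so this suffices. Write $v=x(g)=\sum_k x_k(g)e_k$, which is a finite sum.

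\emph{First summand.} By definition of $P_0$, $P_0(v)=x_0(g)e_0$, so its $0$-th coordinate is $x_0(g)$ and its $n$-th coordinate is $0$ for $n\geq 1$. \emph{Second summand.} Since $R(e_k)=e_{k+1}$, $R(v)=\sum_{k\geq 0}x_k(g)e_{k+1}$. Its $0$-th coordinate is therefore $0$, and its $n$-th coordinate is $x_{n-1}(g)$ for $n\geq 1$; with the minus sign this contributes $-x_{n-1}(g)$. \emph{Third summand.} For each $s\in S$, \eqref{eq:Ps} gives
\[
        P_s(x(gs))=\sum_{k\geq 1,\ s_k=s}x_k(gs)e_k ,
\]
which is again a finite sum. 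Its $0$-th coordinate is $0$. For $n\geq1$, its $n$-th coordinate is $x_n(gs)$ if $s=s_n$ and $0$ otherwise. Summing over $s\in S$, exactly one term survives, namely $s=s_n$, which contributes $x_n(gs_n)$.

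Adding the three contributions yields \eqref{eq:coord0} for $n=0$ and \eqref{eq:coordn} for $n\geq1$. The only point needing care is the third summand. There one must note that for fixed $n$ the sets $\{s=s_n\}$ partition the indices, so that precisely one $P_s$ detects the coordinate $n$. One should also remember that the input to $P_s$ is the value at $gs$ rather than at $g$. There is no real obstacle: this is a direct bookkeeping verification.
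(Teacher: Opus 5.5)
Your proof is correct and follows the same route as the paper: you extract coordinates summand by summand, and the key point is that for each $n\geq 1$ exactly one $s\in S$, namely $s_n$, has $P_s$ retaining $e_n$. Your version is slightly more explicit than the paper's, since you also check directly that $R$ and the maps $P_s$ contribute nothing to the zeroth coordinate.
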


\begin{proof}
The term $P_0(x(g))$ contributes only to the zeroth coordinate, and the term $-R(x(g))$ contributes $-x_{n-1}(g)$ to the $n$-th coordinate for $n\geq 1$.  In the sum $\sum_{s\in S}P_s(x(gs))$, the $e_n$-coordinate with $n\geq 1$ receives exactly one contribution, namely from the summand with $s=s_n$, and that contribution is $x_n(gs_n)$.  This proves the formulas.
\end{proof}

Let $y\in E^G$ be the constant configuration
\begin{equation}\label{eq:y}
        y(g)=e_0
        \qquad(g\in G).
\end{equation}
We shall prove that $y$ belongs to the closure of $\tau_E(E^G)$ but not to $\tau_E(E^G)$.

\section{The non-closed image}\label{sec:nonclosed}

\subsection{No global preimage}

\begin{proposition}\label{prop:not-image}
The configuration $y$ does not belong to $\tau_E(E^G)$.
\end{proposition}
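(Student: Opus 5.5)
We argue by contradiction: suppose $x\in E^G$ satisfies $\tau_E(x)=y$. Using the coordinate form of \cref{lem:coordinate}, equation \eqref{eq:coord0} gives $x_0(g)=y_0(g)=1$ for every $g\in G$. For $n\geq 1$ the $n$-th coordinate of $y$ vanishes, so \eqref{eq:coordn} yields the recursion
\[
        x_n(gs_n)=x_{n-1}(g)\qquad(g\in G,\ n\geq 1).
\]

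The key step is to propagate the value $1$ along the ray of \cref{lem:ray}, starting from an arbitrary base point. Fix $g\in G$. By induction on $n$ we claim $x_n(gp_n)=1$. The case $n=0$ is $x_0(g)=1$. If $x_{n-1}(gp_{n-1})=1$, apply the recursion at the point $gp_{n-1}$ to get $x_n(gp_{n-1}s_n)=x_{n-1}(gp_{n-1})=1$. Since $p_{n-1}s_n=p_n$, this is $x_n(gp_n)=1$.

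Consequently, for each fixed $g$, the elements $gp_n$ carry nonzero $n$-th coordinates, but these are different cells, so no single vector $x(h)$ yet has infinite support. To obtain a contradiction at one cell, we rewrite the claim with base point $h p_n^{-1}$: for every $h\in G$ and every $n$, $x_n(h)=x_0(hp_n^{-1})$ after $n$ steps of the recursion. Indeed, taking $g=hp_n^{-1}$ in the induction gives $x_n(h)=1$. This holds for all $n\geq 0$ simultaneously at the single cell $h$. Hence $x(h)$ has infinitely many nonzero coordinates, contradicting $x(h)\in E=\bigoplus_{n}Ke_n$.

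The only subtle point is this re-basing. The induction must be applied with a base point depending on $n$, namely $hp_n^{-1}$, which is legitimate because $x_0\equiv 1$ everywhere. Pairwise distinctness of the $p_n$ is not even needed for this direction; it will matter instead for the closure statement.
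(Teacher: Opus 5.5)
Your proof is correct and is essentially the paper's argument. The paper inducts directly on the statement that $x_n$ is the constant function $1$, using that right multiplication by $s_n$ is a bijection of $G$; your re-basing identity $x_n(h)=x_0(hp_n^{-1})$ accomplishes exactly this, so the detour through fixed base points along the ray is harmless but unnecessary, and you are right that pairwise distinctness of the $p_n$ is not used here.
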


\begin{proof}
Suppose that $\tau_E(x)=y$ for some $x\in E^G$.  From the zeroth-coordinate identity \eqref{eq:coord0}, we get
\[
        x_0(g)=1
        \qquad\text{for all }g\in G.
\]
For $n\geq 1$, the $n$-th coordinate of $y$ is zero, so \eqref{eq:coordn} gives
\begin{equation}\label{eq:recursion-global}
        x_n(gs_n)=x_{n-1}(g)
        \qquad\text{for all }g\in G.
\end{equation}
Right multiplication by $s_n$ is a bijection of $G$.  Therefore, if $x_{n-1}$ is the constant function $1$, then $x_n$ is also the constant function $1$.  By induction,
\[
        x_n(g)=1
        \qquad\text{for all }n\geq 0\text{ and all }g\in G.
\]
Thus, for any fixed $g\in G$, the vector $x(g)$ would have infinitely many nonzero coordinates:
\[
        x(g)=e_0+e_1+e_2+\cdots.
\]
This vector does not belong to the algebraic direct sum $E=\bigoplus_{n\geq 0}Ke_n$.  This contradiction proves that $y\notin\tau_E(E^G)$.
\end{proof}

\subsection{Local liftability on every finite set}

\begin{proposition}\label{prop:closure}
The configuration $y$ belongs to the closure of $\tau_E(E^G)$ in the prodiscrete topology.
\end{proposition}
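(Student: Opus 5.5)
Given a finite $\Omega\subset G$, we will construct $x\in E^G$ with $\tau_E(x)|_\Omega=y|_\Omega$ by truncating the formal solution $x_n\equiv 1$ at a level $N$ that depends on $\Omega$. By \cref{lem:coordinate}, the condition $\tau_E(x)(g)=e_0$ amounts to three requirements: $x_0(g)=1$, and $x_n(gs_n)=x_{n-1}(g)$ for all $n\ge 1$. The obstruction in \cref{prop:not-image} arose because these equations force infinitely many coordinates to be $1$ at every point. The key is that at a fixed $g$ the relations propagate along the ray: $x_n(gp_n)=x_0(g)$, where $p_n=s_1\cdots s_n$. So we should make $x_n$ nonzero only on a set that carries these relations over $\Omega$, and ensure that each point receives only finitely many nonzero coordinates.

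First attempt: set $x_n(h)=1$ exactly when $h\in\Omega p_n$. Then $x(h)$ has finite support iff $h\in\Omega p_n$ for only finitely many $n$. Since $\Omega$ is finite and the $p_n$ are pairwise distinct (\cref{lem:ray}), for fixed $h$ and each $\omega\in\Omega$ there is at most one $n$ with $h=\omega p_n$. Hence $h$ lies in $\Omega p_n$ for at most $|\Omega|$ values of $n$, and $x\in E^G$.

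Next we check the equations on $\Omega$. For $g\in\Omega$ we have $x_0(g)=1$ because $p_0=1_G$. For $n\ge1$, $gs_n\in\Omega p_n$ is not automatic, since $gs_n$ is not $gp_n$ in general. The fix is to use the correct transport: $x_n(gs_n)=x_{n-1}(g)$ links $x_n$ at $gs_n$ to $x_{n-1}$ at $g$, so along a chain starting at $\omega$ the points are $\omega,\ \omega s_1$ (for $n=1$), then $\omega s_1 s_2$ (for $n=2$), and so on. But at level $n$ the equation is imposed at every $g\in\Omega$, not only at chain points. We therefore define $x_n$ as the indicator of the set $D_n$ determined by $D_0=\Omega$ and $D_n=D_{n-1}s_n$. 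Then $D_n=\Omega p_n$ after all, because $D_{n-1}s_n=\Omega p_{n-1}s_n=\Omega p_n$.

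This is where the main obstacle sits. The requirement at $g\in\Omega$ and level $n$ compares $x_n(gs_n)$ with $x_{n-1}(g)$, and $g$ need not lie in $D_{n-1}$. We would handle this as follows. If $g\notin D_{n-1}$, then $x_{n-1}(g)=0$, and we need $gs_n\notin D_n$, i.e. $g\notin D_{n-1}$, which holds because $\cdot s_n$ is a bijection. If $g\in D_{n-1}$, then $gs_n\in D_n$. So the level-$n$ equation holds at every $g\in G$ with $n\ge 1$. The only failure is at level $0$, where $x_0$ is the indicator of $\Omega$ rather than the constant $1$, but this equals $y_0$ on $\Omega$. Thus $\tau_E(x)|_\Omega=y|_\Omega$ and, by the neighbourhood description of the closure, $y\in\overline{\tau_E(E^G)}$.

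The step needing care is the finite-support check, which uses the pairwise distinctness of the $p_n$ from \cref{lem:ray}; this is exactly where non-local finiteness enters.
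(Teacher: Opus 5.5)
Your proof is correct, but it cuts off the chains differently from the paper.

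\textbf{The paper's route.} The paper truncates. It sets $A_0=\Omega$ and $A_n=(A_{n-1}\cap\Omega)s_n$, so $A_n$ consists of the points $gp_n$ whose whole chain $g,gp_1,\dots,gp_{n-1}$ stays inside $\Omega$. Distinctness of the $p_n$ then forces $A_n=\varnothing$ for $n>|\Omega|$. The relation $x_n(gs_n)=x_{n-1}(g)$ is checked only for $g\in\Omega$, where $\chi_{A_{n-1}\cap\Omega}(g)=\chi_{A_{n-1}}(g)$.

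\textbf{Your route.} You use the untruncated translates $D_n=\Omega p_n$. Since $D_n=D_{n-1}s_n$ and right multiplication by $s_n$ is a bijection, the level-$n$ relation holds at every $g\in G$. Distinctness of the $p_n$ is needed only pointwise: each $h$ lies in at most $|\Omega|$ of the sets $\Omega p_n$, because $n\mapsto hp_n^{-1}\in\Omega$ is injective on the relevant indices. Your ``first attempt'' and your ``fix'' define the same sets, so the argument could be stated directly without that detour.

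\textbf{Comparison.} Your construction gives an exact global identity, $\tau_E(x)=\chi_\Omega e_0$ on all of $G$. By linearity, the image therefore contains every finitely supported configuration with values in $Ke_0$, which exhibits $y$ as a limit of image points at once.

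The paper's construction gives a more economical preimage. Its $x^\Omega$ is supported in the finite set $\Omega\cup\Omega S$ and takes values in the span of $e_0,\dots,e_{|\Omega|}$. Your $x$ has infinite support and unbounded coordinate degree across $G$. That is harmless here, since membership in $E^G$ only requires each $x(h)$ to have finite support.
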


\begin{proof}
Let $\Omega\subset G$ be finite.  We shall construct $x^\Omega\in E^G$ such that
\begin{equation}\label{eq:local-target}
        \tau_E(x^\Omega)(g)=e_0
        \qquad\text{for all }g\in\Omega.
\end{equation}
This is exactly the condition that the basic neighborhood $\mathcal N(y,\Omega)$ meets $\tau_E(E^G)$.

Define finite subsets $A_n\subset G$ recursively by
\begin{equation}\label{eq:An-recursion}
        A_0=\Omega,
        \qquad
        A_n=(A_{n-1}\cap\Omega)s_n\quad(n\geq 1),
\end{equation}
where $Bs=\{bs:b\in B\}$.

We first claim that
\begin{equation}\label{eq:An-explicit}
        A_n=
        \{gp_n:g\in\Omega,
        \ gp_j\in\Omega\text{ for all }0\leq j<n\}.
\end{equation}
For $n=0$ this says $A_0=\Omega$.  Suppose it holds for $n-1$.  An element of $A_n$ has the form $a s_n$, where $a\in A_{n-1}\cap\Omega$.  By induction, $a=gp_{n-1}$ for some $g\in\Omega$ with $gp_j\in\Omega$ for $0\leq j<n-1$, and the additional condition $a\in\Omega$ says $gp_{n-1}\in\Omega$.  Hence $a s_n=gp_n$, and the conditions hold for all $0\leq j<n$.  This proves the claim.

Since the elements $p_0,p_1,p_2,\ldots$ are pairwise distinct, the elements
\[
        gp_0,gp_1,\ldots,gp_{n-1}
\]
are pairwise distinct for each fixed $g\in G$.  Hence, if $A_n\neq\varnothing$, then \eqref{eq:An-explicit} gives $n$ distinct elements of the finite set $\Omega$.  Therefore $n\leq |\Omega|$.  Consequently,
\begin{equation}\label{eq:An-empty}
        A_n=\varnothing
        \qquad(n>|\Omega|).
\end{equation}

Define $x^\Omega\in E^G$ by
\begin{equation}\label{eq:xOmega}
        x^\Omega(h)=\sum_{n\geq 0}\chi_{A_n}(h)e_n,
        \qquad h\in G,
\end{equation}
where $\chi_{A_n}$ denotes the characteristic function of $A_n$, with values in $\{0,1\}\subset K$.  The sum is finite by \eqref{eq:An-empty}, so $x^\Omega(h)\in E$ for every $h\in G$.

Now fix $g\in\Omega$.  Since $A_0=\Omega$, we have
\[
        x^\Omega_0(g)=1.
\]
For $n\geq 1$, using \eqref{eq:An-recursion} and $g\in\Omega$, we obtain
\[
        x^\Omega_n(gs_n)
        =\chi_{A_n}(gs_n)
        =\chi_{A_{n-1}\cap\Omega}(g)
        =\chi_{A_{n-1}}(g)
        =x^\Omega_{n-1}(g).
\]
By \cref{lem:coordinate}, this gives
\[
        (\tau_E(x^\Omega))_0(g)=1,
        \qquad
        (\tau_E(x^\Omega))_n(g)=0\quad(n\geq 1).
\]
Thus \eqref{eq:local-target} holds.  Since $\Omega$ was arbitrary, $y\in\overline{\tau_E(E^G)}$.
\end{proof}

Combining the two propositions gives the countable-direct-sum case.

\begin{theorem}\label{thm:direct-sum}
Let $G$ be a non-locally finite group and let $K$ be a field.  Put
\[
        E=\bigoplus_{n\geq 0}K e_n.
\]
Then there exists a linear cellular automaton $\tau_E\colon E^G\to E^G$ such that
\[
        \overline{\tau_E(E^G)}\neq \tau_E(E^G).
\]
More precisely, the constant configuration $y(g)=e_0$ satisfies
\[
        y\in\overline{\tau_E(E^G)}\setminus\tau_E(E^G).
\]
\end{theorem}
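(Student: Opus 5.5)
The theorem is essentially an assembly of the results of \cref{sec:construction,sec:nonclosed}, so the plan is to combine them directly. First, invoke \cref{lem:ray} to fix a finite subset $S\subset G$ and a sequence $(s_n)_{n\geq 1}$ in $S$ with pairwise distinct partial products $p_n=s_1\cdots s_n$. This is possible precisely because $G$ is not locally finite. Using these data, define $R$, $P_0$ and the selectors $P_s$ as in \eqref{eq:Ps}, and take $\tau_E$ to be the map \eqref{eq:tauE}. By \cref{lem:tau-ca}, $\tau_E$ is a linear cellular automaton with finite memory set $M=\{1_G\}\cup S$.

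Next, take $y$ to be the constant configuration \eqref{eq:y}. \cref{prop:not-image} gives $y\notin\tau_E(E^G)$. The argument there uses the coordinate form of \cref{lem:coordinate} and the bijectivity of right multiplication by $s_n$ to force $x_n\equiv 1$ for all $n$, which contradicts finite support in $E$. \cref{prop:closure} gives $y\in\overline{\tau_E(E^G)}$, by means of the explicit local preimages $x^\Omega$ built from the truncated ray sets $A_n$. Together these yield $y\in\overline{\tau_E(E^G)}\setminus\tau_E(E^G)$, and hence $\overline{\tau_E(E^G)}\neq\tau_E(E^G)$.

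There is no real obstacle at this stage; the substantive work lies in the two propositions. The point most deserving a check is that the finite-support requirement on $x^\Omega(h)$ holds, namely $A_n=\varnothing$ for $n>|\Omega|$. This is exactly where the pairwise distinctness of the $p_n$ from \cref{lem:ray} enters, and it is what separates the local liftability from the failure of global liftability.
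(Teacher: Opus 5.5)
Your proposal is correct and matches the paper's proof. The paper likewise takes the automaton \eqref{eq:tauE} built from the ray of \cref{lem:ray}, cites \cref{lem:tau-ca} for linearity with finite memory, \cref{prop:not-image} for $y\notin\tau_E(E^G)$, and \cref{prop:closure} for $y\in\overline{\tau_E(E^G)}$. You are also right that pairwise distinctness of the $p_n$ enters exactly through $A_n=\varnothing$ for $n>|\Omega|$; the non-image half does not use it at all.
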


\begin{proof}
Use the automaton \eqref{eq:tauE}.  It is a linear cellular automaton by \cref{lem:tau-ca}.  The non-image statement is \cref{prop:not-image}, and the closure statement is \cref{prop:closure}.
\end{proof}

\section{Two extensions of the alphabet}\label{sec:alphabet-extensions}

The countable direct-sum construction admits two complementary extension procedures.  The first preserves linearity and passes to arbitrary infinite-dimensional vector spaces.  The second forgets linear structure and passes along a retract of alphabets.

\subsection{Direct summands of vector-space alphabets}

\begin{proposition}[Direct-summand transfer]\label{prop:direct-summand-transfer}
Let $E$ be a $K$-vector space, let $V=E\oplus W$, and suppose that a linear cellular automaton $\sigma\colon E^G\to E^G$ has non-closed image.  Then there exists a linear cellular automaton $\widehat\sigma\colon V^G\to V^G$ with non-closed image.  Its memory set may be chosen to be the union of a memory set for $\sigma$ with $\{1_G\}$.
\end{proposition}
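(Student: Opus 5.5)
Let $\pi_E\colon V\to E$ and $\pi_W\colon V\to W$ be the projections attached to the decomposition $V=E\oplus W$. We identify $V^G$ with $E^G\times W^G$ through the coordinatewise maps $\pi_E^G$ and $\pi_W^G$. Let $N\subset G$ be a memory set for $\sigma$ with linear local map $\nu\colon E^N\to E$, and put $M=N\cup\{1_G\}$. We define $\widehat\mu\colon V^M\to V$ by
\[
        \widehat\mu\bigl((v_m)_{m\in M}\bigr)=\nu\bigl((\pi_E(v_m))_{m\in N}\bigr)+\pi_W(v_{1_G}).
\]
This is a composition and sum of $K$-linear maps, so it is $K$-linear. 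The resulting cellular automaton $\widehat\sigma\colon V^G\to V^G$ therefore has memory set $M$ and is linear. Unwinding \eqref{eq:ca-definition} gives
\[
        \widehat\sigma(x)=\sigma\bigl(\pi_E^G(x)\bigr)+\pi_W^G(x).
\]
In the product identification this reads $\widehat\sigma(u,w)=(\sigma(u),w)$, so
\[
        \widehat\sigma(V^G)=\sigma(E^G)\times W^G.
\]

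Next we check that the identification $V^G\cong E^G\times W^G$ is a homeomorphism for the prodiscrete topologies. The map $V\to E\times W$ is a bijection of discrete sets. The product topology on $(E\times W)^G$ coincides with the product of the product topologies on $E^G$ and $W^G$, since both are generated by cylinder conditions on finitely many coordinates. Concretely, $z$ and $z'$ agree on a finite set $\Omega$ exactly when their $E$- and $W$-components both agree on $\Omega$.

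To conclude, pick $y\in\overline{\sigma(E^G)}\setminus\sigma(E^G)$, which exists by hypothesis, and consider $\widehat y=(y,0)$. It is not in $\sigma(E^G)\times W^G$ because $y\notin\sigma(E^G)$. It does lie in the closure: given a finite $\Omega\subset G$, choose $u$ with $\sigma(u)|_\Omega=y|_\Omega$; then $\widehat\sigma(u,0)=(\sigma(u),0)$ agrees with $\widehat y$ on $\Omega$. Equivalently, the closure of a product is the product of the closures, and $\overline{\sigma(E^G)}\times W^G\neq\sigma(E^G)\times W^G$.

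None of these steps is deep. The point needing the most care is the exact bookkeeping in \eqref{eq:ca-definition}: one must check that the $E$-part of $\widehat\sigma$ at $g$ depends only on $x$ restricted to $gN$ through $\pi_E$, and that the $W$-part depends only on $x(g)$. This is precisely why $1_G$ must be added to the memory set.
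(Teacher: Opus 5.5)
Your proof is correct and follows the paper's argument: you realize $\widehat\sigma=\sigma\times\id_{W^G}$ under the identification $V^G\cong E^G\times W^G$, compute its image as $\sigma(E^G)\times W^G$, and show that $(y,0)$ lies in the closure but not in the image. The only difference is that you write out the linear local rule on $N\cup\{1_G\}$ and check the homeomorphism explicitly, whereas the paper leaves both implicit.
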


\begin{proof}
Under the natural linear homeomorphism $V^G\cong E^G\times W^G$, define
\[
        \widehat\sigma=\sigma\times\id_{W^G}.
\]
If $M$ is a memory set for $\sigma$, then $M\cup\{1_G\}$ is a memory set for $\widehat\sigma$, and the corresponding local map is linear.  Moreover,
\[
        \widehat\sigma(V^G)=\sigma(E^G)\times W^G.
\]
Choose $y\in\overline{\sigma(E^G)}\setminus\sigma(E^G)$.  Then
\[
        (y,0)\in
        \overline{\sigma(E^G)\times W^G}
        \setminus
        \bigl(\sigma(E^G)\times W^G\bigr),
\]
so the image of $\widehat\sigma$ is not closed.
\end{proof}

\begin{proof}[Proof of \cref{thm:main-intro}\textup{(b)}]
\leavevmode\par
Let $K$ be a field and let $V$ be an infinite-dimensional $K$-vector space.  Choose a countably infinite linearly independent family $(e_n)_{n\geq 0}$ in $V$ and put
\[
        E=\bigoplus_{n\geq 0}K e_n.
\]
Choose a vector-space complement $W$ of $E$ in $V$.  By \cref{thm:direct-sum}, there is a linear cellular automaton $\tau_E\colon E^G\to E^G$ with non-closed image.  Apply \cref{prop:direct-summand-transfer} to $V=E\oplus W$.
\end{proof}

\subsection{Retracts of set alphabets}

\begin{proposition}[Alphabet-retract transfer]\label{prop:alphabet-retract}
Let $A$ and $B$ be sets, and suppose that there are maps
\[
        \iota\colon B\longrightarrow A,
        \qquad
        r\colon A\longrightarrow B,
        \qquad
        r\circ\iota=\id_B.
\]
If there exists a cellular automaton $\sigma\colon B^G\to B^G$ with non-closed image, then there exists a cellular automaton $\tau\colon A^G\to A^G$ with non-closed image.  The automaton $\tau$ may be chosen with the same memory set as $\sigma$.
\end{proposition}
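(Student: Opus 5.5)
Let $M$ be a memory set for $\sigma$ with local rule $\nu\colon B^M\to B$. I will define $\tau$ with the same memory set $M$ and local rule
\[
        \mu=\iota\circ\nu\circ r^M\colon A^M\longrightarrow A.
\]
Since $r^M$ acts coordinatewise and commutes with restriction and with the shift, one checks directly from \eqref{eq:ca-definition} that
\[
        \tau=\iota^G\circ\sigma\circ r^G .
\]
Because $r^G$ is surjective ($r\circ\iota=\id_B$ forces $r$ onto, and $r^G\circ\iota^G=\id_{B^G}$), we get $\tau(A^G)=\iota^G(\sigma(B^G))$.

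Next, take $y\in\overline{\sigma(B^G)}\setminus\sigma(B^G)$ and put $z=\iota^G(y)$. I will show that $z$ lies in the closure of $\tau(A^G)$ but not in $\tau(A^G)$.

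For the closure, fix a finite $\Omega\subset G$. Choose $x\in B^G$ with $\sigma(x)|_\Omega=y|_\Omega$. Then the configuration $\tau(\iota^G x)=\iota^G(\sigma(x))$ agrees with $z$ on $\Omega$. Equivalently, $\iota^G$ is continuous, since it acts coordinatewise between discrete alphabets, so it maps closure into closure.

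For the non-membership, suppose $z=\iota^G(\sigma(x'))$ for some $x'\in B^G$. Since $\iota$ is injective ($r\circ\iota=\id_B$), applying $r^G$ gives $y=\sigma(x')$, which is a contradiction.

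No step is a real obstacle. The only point needing care is verifying that $\tau$ is indeed a cellular automaton with memory set exactly $M$, i.e. the identity $(g^{-1}(r^G x))|_M=r^M((g^{-1}x)|_M)$. This holds because coordinatewise maps commute with the shift action and with restriction.
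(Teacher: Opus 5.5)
Your proposal is correct and matches the paper's proof: it uses the local rule $\iota\circ\nu\circ r^M$, the identity $\tau=\iota^G\circ\sigma\circ r^G$ giving $\tau(A^G)=\iota^G(\sigma(B^G))$, and the witness $\iota^G(y)$, which lies in the closure but not in the image because $\iota^G$ is injective. The differences are minor: you note continuity of $\iota^G$ as an alternative for the closure step, and you spell out that coordinatewise maps commute with the shift and with restriction, which the paper leaves implicit.
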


\begin{proof}
Let $M$ be a memory set for $\sigma$, with local defining map $\mu\colon B^M\to B$.  Define
\[
        \nu=\iota\circ\mu\circ r^M\colon A^M\longrightarrow A,
\]
and let $\tau\colon A^G\to A^G$ be the cellular automaton with memory set $M$ and local defining map $\nu$.  Coordinatewise, one has
\begin{equation}\label{eq:alphabet-transfer-identity}
        \tau=\iota^G\circ\sigma\circ r^G.
\end{equation}
Since $r^G\circ\iota^G=\id_{B^G}$, the map $r^G$ is surjective and therefore
\begin{equation}\label{eq:alphabet-transfer-image}
        \tau(A^G)=\iota^G\bigl(\sigma(B^G)\bigr).
\end{equation}

Choose
\[
        y\in\overline{\sigma(B^G)}\setminus\sigma(B^G).
\]
For every finite $\Omega\subset G$, there exists $z_\Omega\in B^G$ such that
\[
        \sigma(z_\Omega)|_\Omega=y|_\Omega.
\]
Using $x_\Omega=\iota^G(z_\Omega)$ in \eqref{eq:alphabet-transfer-identity}, we obtain
\[
        \tau(x_\Omega)|_\Omega=\iota^G(y)|_\Omega.
\]
Thus $\iota^G(y)\in\overline{\tau(A^G)}$.  If $\iota^G(y)$ belonged to $\tau(A^G)$, then \eqref{eq:alphabet-transfer-image} and the injectivity of $\iota^G$ would give $y\in\sigma(B^G)$, a contradiction.  Hence
\[
        \iota^G(y)\in\overline{\tau(A^G)}\setminus\tau(A^G),
\]
and the image of $\tau$ is not closed.
\end{proof}

\begin{proof}[Proof of \cref{thm:main-intro}\textup{(a)}]
\leavevmode\par
Take $K=\mathbb F_2$ in \cref{thm:direct-sum} and write
\[
        B=\bigoplus_{n\geq 0}\mathbb F_2 e_n
\]
for the underlying set of the resulting vector-space alphabet.  The set $B$ is countably infinite, and the linear cellular automaton supplied by \cref{thm:direct-sum} is, in particular, a cellular automaton $B^G\to B^G$ with non-closed image.

Let $A$ be any infinite set.  Since we work in ZFC, $A$ contains a countably infinite subset; equivalently, there is an injection $\iota\colon B\to A$ because $B$ is countably infinite.  Fix $b_0\in B$ and define a retraction $r\colon A\to B$ by
\[
        r(\iota(b))=b\quad(b\in B),
        \qquad
        r(a)=b_0\quad(a\notin\iota(B)).
\]
Then $r\circ\iota=\id_B$, so \cref{prop:alphabet-retract} gives a cellular automaton $A^G\to A^G$ with non-closed image.
\end{proof}

\subsection{The open problems and the characterization}

\begin{proof}[Proof of \cref{cor:open-problems}]
A periodic group which is not locally finite is, in particular, not locally finite.  Part \textup{(a)} of \cref{thm:main-intro} answers OP-6, and part \textup{(b)} answers OP-7.
\end{proof}

\begin{proof}[Proof of \cref{thm:characterization-intro}]
By \cref{prop:locally-finite-closed}, condition \textup{(i)} implies \textup{(ii)}.  Clearly, \textup{(ii)} implies \textup{(iii)}, and \textup{(iii)} implies \textup{(iv)}.  If $G$ is not locally finite, then \cref{thm:main-intro}\textup{(a)} produces a cellular automaton with non-closed image over every infinite alphabet.  Hence \textup{(iv)} implies \textup{(i)}.

Similarly, \cref{prop:locally-finite-closed} shows that \textup{(i)} implies \textup{(v)}, and \textup{(v)} plainly implies \textup{(vi)}.  If $G$ is not locally finite, then \cref{thm:main-intro}\textup{(b)} produces a linear cellular automaton with non-closed image over every infinite-dimensional vector-space alphabet.  Hence \textup{(vi)} implies \textup{(i)}.  This completes both cycles of equivalences.
\end{proof}

\begin{remark}[The ordinary and linear thresholds differ]\label{rem:thresholds}
Let $K$ be an infinite field and let $V$ be a nonzero finite-dimensional $K$-vector space.  Then $V$ is infinite as a set.  If $G$ is not locally finite, \cref{thm:main-intro}\textup{(a)} gives a cellular automaton $V^G\to V^G$ with non-closed image, whereas every \emph{linear} cellular automaton on the same configuration space has closed image \cite[Theorem 8.8.1]{CAG2}.  Thus the arbitrary-alphabet statement is genuinely stronger than the linear statement on a fixed alphabet; the bridge from OP-7 to OP-6 is the retract construction, not a linear structure on the prescribed alphabet.
\end{remark}

\section{The Grigorchuk test case}\label{sec:grigorchuk}

We spell out the standard periodic example to which both OP-6 and OP-7 are naturally attached.

\begin{definition}[The first Grigorchuk group]\label{def:grigorchuk}
Let $\Sigma=\{0,1\}$, and let $\Sigma^*$ be the rooted binary tree of all finite words over $\Sigma$, including the empty word $\eps$.  The first Grigorchuk group $\Grig$ is the subgroup of $\Aut(\Sigma^*)$ generated by four automorphisms $a,b,c,d$ defined recursively as follows.  The automorphism $a$ fixes the root and swaps the two first-level subtrees:
\[
        a(\eps)=\eps,
        \qquad
        a(0w)=1w,
        \qquad
        a(1w)=0w.
\]
The remaining generators fix the root and satisfy, for every $w\in\Sigma^*$,
\[
\begin{aligned}
        b(0w)&=0a(w), & b(1w)&=1c(w),\\
        c(0w)&=0a(w), & c(1w)&=1d(w),\\
        d(0w)&=0w,    & d(1w)&=1b(w).
\end{aligned}
\]
Thus
\[
        \Grig=\langle a,b,c,d\rangle\leq \Aut(\Sigma^*).
\]
Equivalently, in wreath-recursion notation,
\[
        a=(1,1)\sigma,
        \qquad
        b=(a,c),
        \qquad
        c=(a,d),
        \qquad
        d=(1,b),
\]
where $\sigma$ is the transposition of the two first-level vertices.
\end{definition}

It is classical that $\Grig$ is finitely generated, infinite, and periodic; it is one of the original examples arising from Grigorchuk's work on the Burnside problem and groups of intermediate growth \cite{Grigorchuk1984,CAG2}.  Since a finitely generated locally finite group is finite, $\Grig$ is not locally finite.

\begin{corollary}[The first Grigorchuk group]\label{cor:grigorchuk}
Let $\Grig$ be the first Grigorchuk group.
\begin{enumerate}[label=\textup{(\alph*)}]
\item For every infinite set $A$, there exists a cellular automaton $\tau\colon A^{\Grig}\to A^{\Grig}$ whose image is not closed in the prodiscrete topology.
\item For every field $K$ and every infinite-dimensional $K$-vector space $V$, there exists a linear cellular automaton $\tau\colon V^{\Grig}\to V^{\Grig}$ whose image is not closed in the prodiscrete topology.
\end{enumerate}
\end{corollary}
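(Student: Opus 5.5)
The corollary follows from \cref{thm:main-intro} once we know that $\Grig$ is not locally finite, so the plan is to verify that hypothesis and then invoke both parts of the theorem.

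\emph{Step 1.} By \cref{def:grigorchuk}, $\Grig=\langle a,b,c,d\rangle$ is finitely generated. We use the classical fact, recalled just before the corollary, that $\Grig$ is infinite.

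\emph{Step 2.} A finitely generated group that is locally finite must be finite, since it is one of its own finitely generated subgroups. Combined with Step 1, this shows that $\Grig$ is not locally finite. Equivalently, $\Grig$ itself is a finitely generated infinite subgroup of $\Grig$. This is exactly the input needed for \cref{lem:ray}, and hence for \cref{thm:direct-sum}.

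\emph{Step 3.} Apply \cref{thm:main-intro}(a) with $G=\Grig$ to obtain part (a). Apply \cref{thm:main-intro}(b) to obtain part (b).

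The only substantive ingredient is that $\Grig$ is infinite. This is imported from the literature rather than reproved here. If one wanted a self-contained argument, the first thing to try would be the standard one. The stabilizer of the first level maps via the wreath recursion onto a subgroup of $\Grig\times\Grig$ whose projection to each factor is all of $\Grig$. Since that stabilizer is a proper subgroup of index $2$, the group cannot be finite.

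Periodicity of $\Grig$ plays no role in the argument; it only explains why this example is relevant to OP-6 and OP-7. For concreteness, one may take $S=\{a,b,c,d\}$ in \cref{lem:ray}, which is symmetric because the generators are involutions. The resulting ray then has labels that necessarily vary, since $\Grig$ has no element of infinite order.
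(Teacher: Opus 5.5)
Your proposal is correct and matches the paper's proof: $\Grig$ is finitely generated and (classically) infinite, hence not locally finite, so both parts of \cref{thm:main-intro} apply directly. Your optional self-contained sketch of infiniteness (the index-$2$ first-level stabilizer surjects onto $\Grig$ under each coordinate of the wreath recursion) and your remark on taking $S=\{a,b,c,d\}$ in \cref{lem:ray} are also sound, though neither is needed.
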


\begin{proof}
By the preceding discussion, $\Grig$ is not locally finite.  Apply \cref{thm:main-intro}.
\end{proof}

\begin{remark}[What replaces an infinite-order direction]\label{rem:replacement}
In the non-periodic case treated by Ceccherini-Silberstein and Coornaert, an element of infinite order gives a one-sided direction along which a shift-type obstruction may be placed.  A periodic group has no such element.  The present construction shows that the infinite-order direction is not necessary: an infinite simple ray in finitely many generating directions is enough.  The labels of this ray may vary, but the direct-sum alphabet stores the $n$-th label in its $n$-th coordinate through the projections $P_s$ in \eqref{eq:Ps}.  This is the mechanism that removes the periodicity obstruction.
\end{remark}

\begin{remark}[Finite alphabets and finite-dimensional linear alphabets]
For a finite set $A$, compactness of $A^G$ in the usual ZFC framework implies that the image of every cellular automaton $A^G\to A^G$ is closed, for every group $G$.  For a finite-dimensional vector space $V$, every linear cellular automaton $V^G\to V^G$ has closed image, even when the underlying set of $V$ is infinite \cite[Theorem 8.8.1]{CAG2}.  The results above show that, once arbitrary infinite alphabets or infinite-dimensional linear alphabets are allowed, local finiteness is exactly the group-theoretic condition forcing the closed image property.
\end{remark}

\end{document}